\newtheorem{theorem}{Theorem}[section]
\newtheorem{lemma}[theorem]{Lemma}
\theoremstyle{definition}
\newtheorem{definition}[theorem]{Definition}
\theoremstyle{remark}
\def\noteson{\gdef\ines##1{\noindent{\color{blue}[Inés: ##1]}}}
\renewcommand{\H}{\mathrm{H}}
\newcommand{\rk}{\mathrm{rk}}
\DeclareMathAlphabet{\mathpzc}{OT1}{pzc}{m}{it}
\newcommand\DEFINEALPHABETLOOP[3]{%
  \ifx\relax#3\expandafter\@gobble\else\expandafter\@firstofone\fi
  {\expandafter\newcommand\expandafter*\csname#3#1\endcsname{#2{#3}}%
   \DEFINEALPHABETLOOP{#1}{#2}}%
}%
\newcommand\Definealphabet[2]{%
  \DEFINEALPHABETLOOP{#1}{#2}abcdefghijklmnopqrstuvwxyzABCDEFGHIJKLMNOPQRSTUVWXYZ\relax
}%
\begin{document}
\title{Confidence Bands for\\Multiparameter Persistence Landscapes}

\author{Inés García-Redondo\thanks{LSGNT, Imperial College, London, UK; \texttt{i.garcia-redondo22@imperial.ac.uk}} \and Anthea Monod\thanks{Imperial College, London, UK; \texttt{a.monod@imperial.ac.uk}}
\and Qiquan Wang\thanks{Imperial College, London, UK; \texttt{qiquan.wang17@imperial.ac.uk}}}

\maketitle            
\begin{abstract}

Multiparameter persistent homology is a generalization of classical persistent homology, a central and widely-used methodology from topological data analysis, which takes into account density estimation and is an effective tool for data analysis in the presence of noise. Similar to its classical single-parameter counterpart, however, it is challenging to compute and use in practice due to its complex algebraic construction. In this paper, we study a popular and tractable invariant for multiparameter persistent homology in a statistical setting: the multiparameter persistence landscape. We derive a functional central limit theorem for multiparameter persistence landscapes, from which we compute confidence bands, giving rise to one of the first statistical inference methodologies for multiparameter persistence landscapes. We provide an implementation of confidence bands and demonstrate their application in a machine learning task on synthetic data.

\end{abstract}
\section{Introduction}

\emph{Persistent homology} (PH) is a methodology from topological data analysis (TDA) that has enjoyed widespread success in diverse application areas in recent decades as an effective tool for data analysis, statistics, and machine learning. It provides a robust framework and computationally efficient algorithms for extracting topological descriptors from data, aiming to capture their underlying shape and structure. The core idea is to construct a sequence of nested topological spaces \(\{K_t: t \in T\}\), known as a \emph{filtration}, typically indexed by an interval of the real line \(T \subset \mathbb{R}\), and such that $K_t \subset K_s$ whenever $t \leq s$. Topological invariants that are amenable to data analysis are computed by tracking how topological features, characterized by homology, appear and disappear as the filtration parameter evolves. This procedure processes data and outputs a \textit{persistence diagram}, a statistical summary capturing the \textit{lifetimes} of its topological features. The statistical and probabilistic properties of persistence diagrams, as random objects, have been a rich and challenging area of study within TDA, since the algebraic construction of persistence diagrams induces an intricate, non-Euclidean geometry in the space of all persistence diagrams. A common workaround to this complex geometry is to vectorize or embed the persistence diagrams into a Euclidean or other space with a more standard geometry for statistics and machine learning methods; one of the most popular vectorizations is known as the \emph{persistence landscape} \cite{Bubenik2015}.

The classical setting of \emph{single-parameter} PH described above can be extended to \emph{multiparameter persistent homology} (MPH), where \(T\) becomes a multidimensional set, often \(\mathbb{R}^d\) with the product order. Practically, this extension incorporates additional structural information---such as density estimation---into the filtration and is an effective adaptation to the classical setting in the presence of noise. MPH is significantly more challenging to compute and use in real data settings, as it lacks a natural invariant counterpart to the persistence diagram in single-parameter persistence. The search for meaningful and tractable invariants remains an active and evolving frontier in this field. The \emph{multiparameter persistence landscape} \cite{Vipond2020} is one such invariant that also happens to be an extension of the classical persistence landscape. As such, the known functional and statistical properties of persistence landscapes carry over to the multiparameter setting \cite{Bubenik2015,Vipond2020}.

This paper extends the statistical framework of confidence bands from single-parameter \cite{ChazalFasyLecci2014a,ChazalFasyLecci2014} to multiparameter persistence landscapes. Our work is among the first to develop statistical methodology for inference with MPH. Our main contributions are: (i) a new \textit{functional central limit theorem} (CLT) for multiparameter persistence landscapes; (ii) \textit{confidence bands} for multiparameter persistence landscapes based on the CLT, and (iii) an algorithm to \textit{implement confidence bands on synthetic data}, illustrating their application in a classification task.

\section{Background}

We begin by providing the setting of our work and relevant background.

\subsection{Persistent Homology and Landscapes}
\label{sec:landscapes}

We overview persistent homology and its vectorization using multiparameter persistence landscapes.\\

\noindent
\textbf{Persistence Modules and Homology.} The central object in the theory of persistent homology is the \textit{persistence module}, a family of vector spaces $M_\bullet = \{M_{\xbf}: \xbf \in \Rbb^d\}$ indexed by $\Rbb^d$ with the product order, and linear maps $\varphi^{\xbf}_{\ybf}: M_{\xbf} \to M_{\ybf}$, for $\xbf \leq \ybf$, called \textit{internal} or \textit{transition maps}. For the remainder of this paper, we will assume that the parameters take values in a bounded box $B(T_1, \dots, T_d):=[0, T_1] \times \dots \times [0, T_d] \subset \Rbb^d$. Normalizing in each direction, we can simply consider the box $[0, T]^d$ with $T = \max\{T_i: 1\leq i \leq d\}$. 

Persistence modules arise in PH by taking the homology groups of the topological spaces in a filtration $K_\bullet = \{K_\xbf: \xbf \in \Rbb^d\}$ constructed from input data. For a topological space $K$, the \textit{$k$th homology group} $\H_k(K)$ contains information about the topological features of $K$: for $k=0$, these are its connected components; for $k=1$, its loops; for $k=2$, its bubbles and cavities, and so on for higher values of $k\geq 0$. The dimension of these vectors spaces, $\beta_k = \dim \H_k(K)$, corresponds to the number of these topological features.\\

\noindent
\textbf{Multiparameter Persistence Landscapes.} Given a persistence module $M_\bullet = \{M_{\xbf}: \xbf \in \Rbb^d\}$, its \textit{rank invariant} is defined from the ranks of the internal maps $\beta^{\xbf}_{\ybf} = \dim (\mathrm{Im}(\varphi^{\xbf}_{\ybf}))$ as the map $\rk: \Rbb^d\times \Rbb^d \to \Rbb$, such that
\[\rk(\xbf,\ybf) = \begin{cases}
    \beta^{\xbf}_{\ybf} & \text{if } \xbf\leq \ybf,\\
    0 & \text{otherwise}.
\end{cases}\]
The persistence landscape can be obtained from a re-scaling of this function in the following way.
\begin{definition}[Multiparameter Persistence Landscape \cite{Bubenik2015,Vipond2020}]
\label{def:landscape}
    Given a persistence module $M_\bullet =\{M_{\xbf}: \xbf \in \Rbb^d\}$, its persistence landscape $\lambda: \Nbb \times \Rbb^d \to \Rbb$ is defined as
    \[\lambda(k, \xbf) = \sup\{\epsilon>0: \beta^{\xbf-\hbf}_{\xbf+\hbf}\ge k \text{ for all } \hbf \ge \mathbf{0} \text{ with } \norm{\hbf}_\infty\leq \epsilon\}.\]
\end{definition}
In other words, the $k$th persistence landscape outputs the maximal radius (in the $\ell^\infty$ metric of $\Rbb^d)$ over which $k$ topological features persist in all positive directions for each $\xbf \in \Rbb^d$. From Lemma 20 in \cite{Vipond2020}, $\lambda(k, \xbf) \ge 0$ and it is a 1-Lipschitz function in $\xbf$ for all $k\ge 1$. In this paper, we focus on $\lambda(\xbf) := \lambda(1, \xbf)$, but the results hold for any fixed $k$, the crucial property being that $\lambda(\xbf)$ is 1-Lipschitz. We let $\Lcal$ be the space of landscapes $\lambda:[0,T]^d \to \Rbb$ for persistence modules with parameters in a bounded box $[0,T]^d \subset \Rbb^d$, which is a subset of the Banach space $L^p([0,T]^d)$, $1 \leq p \leq \infty$.\\ 

\subsection{Central Limit Theorems and Bootstrap Confidence Bands for Persistence Landscapes}

We now discuss the known convergence of single- and multiparameter landscapes as random variables (r.v.) in Banach space, given by central limit theorems. We also outline the construction of confidence bands---the dual construction to hypothesis testing---using bootstrap methods.\\

\noindent
\textbf{Central Limit Theorems (CLT)} hold for both single- and multiparameter persistence landscapes \cite{Bubenik2015,Vipond2020}: Suppose $X$ is a Borel measurable r.v.~on some probability space; 
then the persistence landscape for $X$, denoted $\Lambda = \Lambda(X)$, is a Banach space-valued r.v. Let $\{X_i\}$ be i.i.d.~copies of $X$ and $\Lambda_i$ be the corresponding landscapes, then given $\mathbb{E}[\|\Lambda\|]< \infty$ and $\mathbb{E}[\|\Lambda^2\|]< \infty$ we have that 
\(\sqrt{n}(\bar{\Lambda}_n - I_\Lambda)\),
where $I_\Lambda$ is the \textit{Pettis integral} of $\Lambda$, converges weakly to $G(\Lambda)$, a Gaussian r.v.~with the same covariance structure as $\Lambda$.

For the single-parameter case, a stronger functional CLT (also known as \textit{Donsker's Theorem}) exists \cite{ChazalFasyLecci2014}:
\[ \{\Gbb_n (\xbf)\}_{\xbf\in[0,T]} := \{\sqrt{n} (\overline{\Lambda}_n(\xbf) - I_\Lambda(\xbf))\}_{\xbf \in [0,T]}\]
converges weakly to a \emph{Gaussian process} on $[0,T]$. A Gaussian process is a family of real-valued r.v.s such that any finite collection from the family follows a multivariate normal distribution. The weak convergence to the limiting Gaussian process allows for the construction of confidence bands and hypothesis testing, which can be achieved using nonparametric methods such as bootstrap. The main contribution of this paper is to extend this framework to the multiparameter case in Section \ref{sec:fclt} and to demonstrate its utility in Section \ref{sec:simulations}.\\

\noindent
\textbf{Bootstrapping} 
is an inferential technique used to estimate population parameters from sample statistics by examining the relationship between the sample parameter and the statistic computed over a large number of resamples. In this paper, we focus on using the bootstrap to construct confidence bands for persistence landscapes. For a given functional parameter $\theta:\Omega \to \Rbb$, a \textit{confidence band} of level $\alpha$ for the parameter is given by a pair of functions $l,\,u: \Omega \to \Rbb$ such that $\Pbb(\theta(\omega) \in [l(\omega),u(\omega)],\, \forall \omega \in \Omega) \geq 1-\alpha$. Building on the single-parameter approach in \cite{ChazalFasyLecci2014,ChazalFasyLecci2014a}, we derive confidence bands for the mean multiparameter persistence landscape, as detailed in Algorithm \ref{alg:confidence_bands}. 

\section{A Functional Central Limit Theorem for Multiparameter Persistence Landscapes}\label{sec:fclt}

We extend the theory in \cite{ChazalFasyLecci2014a,ChazalFasyLecci2014} to prove a functional CLT for multiparameter persistence landscapes. Let $\Lambda_1, \ldots,  \Lambda_n$ be an i.i.d.~sample from a probability distribution $P$ on the space of multiparameter persistence landscapes $\Lcal$. 
Recall that we have the mean landscape  $I_\Lambda = \Ebb_{\Lambda \sim P} [\Lambda(\xbf)]$ and the sample average $\overline{\Lambda}_n (\xbf) = \frac{1}{n} \sum_{i=1}^n \Lambda_i(\xbf)$.

We consider the family of measurable functions $\Fcal = \{f_\xbf: \Lcal  \to \Rbb, \,  \lambda \mapsto f_\xbf(\lambda) = \lambda(\xbf)\}_{\xbf \in [0,T]^d}$ and define the empirical process indexed by this family:
\begin{equation}
\label{eq:empirical_process_landscapes}
    \{\Gbb_n f_\xbf\}_{f_\xbf\in \Fcal} := \{\Gbb_n (\xbf)\}_{\xbf \in [0,T]^d} = \{\sqrt{n} (\overline{\lambda}_n(\xbf) - \mu(\xbf))\}_{\xbf\in [0,T]^d},
\end{equation}
which we can rewrite as indexed by $[0,T]^d$ (the domain of the functions in $\Lcal$) for simplicity. 
A stochastic process $\{Y_n\}_{f \in \Fcal}$ indexed by $\Fcal$ \emph{converges weakly} to $Y$ in $\ell^\infty(\Fcal)$, denoted by $Y_n \rightsquigarrow Y$, if $\lim_{n \to \infty}\Ebb^*[g(Y_n)] = \Ebb[g(Y)]$ for every bounded, continuous function $g: \ell^\infty(\Fcal) \to \Rbb$; where $\Ebb^*$ denotes the outer expectation. Using the outer expectation here is a technical detail necessary in case the $Y_n$ are not Borel measurable, but it is not crucial for the remainder of the paper.
\begin{theorem}[Uniform Convergence of Multiparameter Persistence Landscapes]
\label{thm:fclt}
Let $\Gbb$ be a Gaussian process indexed by $\xbf \in [0,T]^d$ with mean zero and covariance function \(\kappa(\xbf, \ybf) = \int \lambda(\xbf) \lambda(\ybf) dP(\lambda) - \int \lambda(\xbf) dP(\lambda) \, \int \lambda(\ybf) dP(\lambda).\) Then $\Gbb_n \rightsquigarrow \Gbb$, where $\Gbb_n$ is the empirical process defined by an i.i.d.~sample of landscapes from \eqref{eq:empirical_process_landscapes}.
\end{theorem}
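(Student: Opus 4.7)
The plan is to show that the class $\Fcal = \{f_\xbf\}_{\xbf \in [0,T]^d}$ is a $P$-Donsker class; the theorem then follows from the standard bracketing CLT (e.g., Theorem 19.5 of van der Vaart's \emph{Asymptotic Statistics}), which simultaneously yields $\Gbb_n \rightsquigarrow \Gbb$ in $\ell^\infty(\Fcal) \cong \ell^\infty([0,T]^d)$ and identifies the limit as a tight zero-mean Gaussian process with covariance $\kappa(\xbf, \ybf) = P(f_\xbf f_\ybf) - Pf_\xbf \cdot Pf_\ybf$, exactly as stated. This parallels the argument in \cite{ChazalFasyLecci2014} for the single-parameter case; the only modification needed is a bracketing-entropy estimate adapted to the higher-dimensional index cube. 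Concretely, it is enough to verify that $\Fcal$ admits a $P$-square-integrable envelope and that the bracketing entropy integral $\int_0^1 \sqrt{\log N_{[]}(\epsilon, \Fcal, L^2(P))}\,d\epsilon$ is finite.

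\textbf{Envelope and bracketing entropy.} By Lemma~20 of \cite{Vipond2020}, every $\lambda \in \Lcal$ is nonnegative and $1$-Lipschitz with respect to $\|\cdot\|_\infty$. The admissibility condition $\hbf \geq \mathbf{0}$ together with $\xbf \pm \hbf \in [0,T]^d$ in Definition \ref{def:landscape} forces $\lambda(\xbf) \leq T$ for every $\xbf \in [0,T]^d$, so the constant function $F \equiv T$ is a $P$-square-integrable envelope for $\Fcal$. For the entropy bound, fix $\epsilon > 0$ and cover $[0,T]^d$ by at most $N_\epsilon \leq \lceil T/\epsilon\rceil^d$ closed $\ell^\infty$-balls of radius $\epsilon$ with centers $\xbf_1, \ldots, \xbf_{N_\epsilon}$. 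Setting $\ell_i(\lambda) = f_{\xbf_i}(\lambda) - \epsilon$ and $u_i(\lambda) = f_{\xbf_i}(\lambda) + \epsilon$, the $1$-Lipschitz property gives $\ell_i \leq f_\xbf \leq u_i$ pointwise in $\lambda$ whenever $\|\xbf - \xbf_i\|_\infty \leq \epsilon$, while $\|u_i - \ell_i\|_{L^2(P)} = 2\epsilon$ by construction. Hence $\log N_{[]}(2\epsilon, \Fcal, L^2(P)) \leq d \log \lceil T/\epsilon\rceil$, and the entropy integral converges for any fixed $d$.

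\textbf{Conclusion and main obstacle.} With the envelope and entropy conditions in hand, the bracketing CLT delivers $\Gbb_n \rightsquigarrow \Gbb$ with the claimed covariance, finishing the argument. I expect the only delicate point to be the bracketing estimate, and it is mild here because the uniform $1$-Lipschitzness of landscapes, already provided by \cite{Vipond2020}, lets one lift $\ell^\infty$ covers of the compact cube $[0,T]^d$ directly to $L^2(P)$ brackets of $\Fcal$; the multiparameter dimension $d$ enters only through the polynomial factor in the covering number, which is harmless under the square-root of a logarithm. The measurability/outer-expectation formalism flagged after \eqref{eq:empirical_process_landscapes} is subsumed by the bracketing CLT, so no extra work is needed on that front.
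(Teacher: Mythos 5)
Your proposal is correct, and it reaches the same conclusion by the same underlying geometric facts (a constant envelope plus the $1$-Lipschitz property from Lemma 20 of \cite{Vipond2020}), but it invokes a different key lemma than the paper does. The paper verifies the \emph{uniform covering entropy} condition: it bounds $N(\Fcal, L_2(Q), \epsilon\norm{F}_{Q,2})$ by $(2/\epsilon)^d$ uniformly over all finitely discrete probability measures $Q$, using a grid in $[0,T]^d$ and the Lipschitz bound $\norm{f_\xbf - f_\ybf}_{Q,2} \leq \norm{\xbf-\ybf}_\infty$, and then applies Kosorok's Theorem 2.5. You instead verify the \emph{bracketing entropy} condition $\int_0^1 \sqrt{\log N_{[]}(\epsilon, \Fcal, L^2(P))}\,d\epsilon < \infty$ and apply the bracketing Donsker theorem; the brackets $[f_{\xbf_i}-\epsilon, f_{\xbf_i}+\epsilon]$ built from an $\ell^\infty$-cover of the cube are valid for exactly the same Lipschitz reason, and your count $\log N_{[]}(2\epsilon,\Fcal,L^2(P)) \leq d\log\lceil T/\epsilon\rceil$ gives a convergent entropy integral just as the paper's does. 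What your route buys is twofold: you only need entropy with respect to the single measure $P$ rather than a supremum over all finitely discrete $Q$, and the bracketing CLT carries no additional measurability hypotheses, so the outer-expectation caveat the paper flags (and the pointwise-measurability condition implicit in Kosorok's theorem, which the paper does not explicitly check) is genuinely dispensed with rather than merely set aside. The only discrepancies are cosmetic: you use the envelope $F \equiv T$ where the paper sharpens to $T/2$, and your ball count is looser by a factor of $2^d$; neither affects square-integrability of the envelope or convergence of the entropy integral.
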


We need the following lemma from \cite{Kosorok2008}, Theorem 2.5, in order to prove Theorem \ref{thm:fclt}. This lemma provides a sufficient condition over the family of functions $\Fcal$ to ensure that the empirical process indexed by the family converges to a Gaussian process. Given $Q$ a probability on $\Lcal$, we use it to define a distance $L^2(Q)$ over the class of functions $\Fcal$ as $\norm{ f - g }_{Q, 2}^2 = \int |f - g |^2\, dQ$. Let $N(\Fcal, L_2(Q), \epsilon)$ be the covering number of $\Fcal$, i.e., the size of the smallest $\epsilon$-net in this metric. 
Lastly, recall that given a family of measurable functions $\Fcal =\{f: \Omega \to \Rbb\}$, a \textit{measurable envelope} of this collection is the ``smallest'' measurable function $F: \Omega \to \Rbb$ such that $f(\omega) \leq F(\omega)$ for almost every $\omega\in \Omega$. 

\begin{lemma}
\label{lemma:convergence}
    Let $\Fcal$ be a class of measurable functions satisfying 
    \[\int_0^1 \sqrt{\log \sup_Q N(\Fcal, L_2(Q), \epsilon \norm{F}_{Q,2})} \, d\epsilon < \infty\] 
    where $F$ is a measurable envelope of $\Fcal$ and the supremum is taken over all finitely discrete probability measures $Q$ with $\norm{F}_{Q,2}>0$. If $\Ebb[F^2] < \infty$, then $\Gbb_n \rightsquigarrow \Gbb$ with $\{\Gbb_n f\}_{f \in \Fcal}$ the empirical process indexed by $\Fcal$ and $\Gbb$ a Gaussian process with zero mean and covariance function \(\kappa(\xbf, \ybf) = \int \lambda(\xbf) \lambda(\ybf) dP(\lambda) - \int \lambda(\xbf) dP(\lambda) \, \int \lambda(\ybf) dP(\lambda).\)
\end{lemma}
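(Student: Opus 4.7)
The plan is to deduce $\Gbb_n \rightsquigarrow \Gbb$ in $\ell^\infty(\Fcal)$ via the classical two-step criterion for weak convergence of empirical processes: (a) convergence of all finite-dimensional marginals of $\Gbb_n$ to those of $\Gbb$, and (b) asymptotic equicontinuity of $\{\Gbb_n\}$ with respect to the pseudo-metric $\rho_P(f,g) = \norm{f-g}_{P,2}$. Together these imply weak convergence to a tight Borel limit in $\ell^\infty(\Fcal)$, and the matching finite-dimensional distributions identify the limit as the Gaussian process with covariance $\kappa$.

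Step (a) is essentially routine. Fix any $f_1, \ldots, f_k \in \Fcal$. The envelope bound $|f_j|\leq F$ together with $\Ebb[F^2]<\infty$ ensures that the i.i.d.~mean-zero random vectors $(f_j(\Lambda_i)-Pf_j)_{j=1}^k$ have finite covariance $\Sigma_{jl} = Pf_jf_l - Pf_j\cdot Pf_l$, so the multivariate Lindeberg--Lévy CLT yields $(\Gbb_n f_1,\ldots,\Gbb_n f_k) \rightsquigarrow \Ncal(0,\Sigma)$. Specialising to $f_\xbf(\lambda)=\lambda(\xbf)$ recovers exactly the covariance $\kappa$ in the statement.

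Step (b) is where the entropy hypothesis does its work. I would proceed via the standard symmetrisation--chaining pipeline. First, a symmetrisation inequality replaces $\Gbb_n$ by its Rademacher-randomised counterpart $\Gbb_n^\circ f = n^{-1/2}\sum_i \varepsilon_i f(\Lambda_i)$ up to a constant factor. Conditionally on $(\Lambda_i)$, the process $f \mapsto \Gbb_n^\circ f$ is sub-Gaussian in the random empirical semimetric $L^2(\Pbb_n)$ by Hoeffding's inequality, so a Dudley-type chaining bound gives
\[\Ebb_\varepsilon\!\left[\sup_{\norm{f-g}_{\Pbb_n,2}<\delta}|\Gbb_n^\circ f - \Gbb_n^\circ g|\right] \;\lesssim\; \int_0^{\delta} \sqrt{\log N(\Fcal, L_2(\Pbb_n), \epsilon)}\, d\epsilon.\]
After the change of variables $\epsilon \mapsto \epsilon\norm{F}_{\Pbb_n,2}$, the uniform entropy assumption dominates the random integrand by the deterministic $\sqrt{\log \sup_Q N(\Fcal, L_2(Q), \epsilon\norm{F}_{Q,2})}$, whose integral over $(0,1)$ is finite by hypothesis. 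Using $\Ebb[\norm{F}_{\Pbb_n,2}^2]=\Ebb[F^2]<\infty$ to control the outer factor and letting $\delta\downarrow 0$ then yields equicontinuity.

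The main obstacle is the transition from the empirical semimetric $L^2(\Pbb_n)$, in which chaining is most naturally carried out, to the population semimetric $L^2(P)$, in which asymptotic equicontinuity must be expressed. This passage is precisely what the uniform entropy condition, as opposed to a $P$-specific one, is designed to handle: the covering-number ceiling applies to every finitely discrete $Q$ and hence to $\Pbb_n$ regardless of the realised sample, which is what lets the conditional chaining bound be upgraded to an unconditional one via Fubini. Measurability of the suprema is handled transparently by working with outer probabilities $\Pbb^*$ throughout, consistent with the notion of weak convergence adopted earlier in the paper.
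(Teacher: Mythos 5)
The paper does not actually prove this lemma: it is imported wholesale as a black-box citation of Theorem 2.5 in Kosorok's monograph (the uniform-entropy Donsker theorem), and the paper's only original work is the verification of its hypotheses for the landscape class $\Fcal$ in the proof of Theorem \ref{thm:fclt}. Your proposal instead reconstructs the proof of the cited theorem itself, and the outline you give---finite-dimensional convergence by the multivariate Lindeberg--L\'evy CLT under the square-integrable envelope, plus asymptotic equicontinuity via symmetrisation, conditional sub-Gaussianity, and a Dudley chaining bound controlled by the uniform entropy integral---is the standard and correct architecture for that result. So the comparison is really between citing a textbook theorem and sketching its textbook proof; your route is self-contained but necessarily longer, and nothing in it conflicts with what the paper does.

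One caveat: the step you flag as ``the main obstacle'' is resolved too quickly. The uniform entropy condition does let you dominate the random Dudley integral over $L^2(\Pbb_n)$ by a deterministic one, but that alone does not convert the conditional equicontinuity modulus in the \emph{empirical} semimetric into the required modulus in the \emph{population} semimetric $\rho_P$. The standard argument needs an additional ingredient: a Glivenko--Cantelli-type statement showing that $\sup_{f,g\in\Fcal}\bigl|\,\norm{f-g}_{\Pbb_n,2}^2-\norm{f-g}_{P,2}^2\bigr|\to 0$ in outer probability (applied to the class $\{(f-g)^2\}$, itself controlled via the entropy bound and the envelope $4F^2$), together with a truncation of $F$ at level $\varepsilon\sqrt{n}$ to handle the contribution where the envelope is large. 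Without these, the chaining bound controls the supremum over small $L^2(\Pbb_n)$-balls, not over small $\rho_P$-balls, and the equicontinuity claim does not follow. These are standard repairs, but they are genuine missing steps in the sketch as written; if the intent is simply to justify the lemma for use in Theorem \ref{thm:fclt}, citing the reference as the paper does is the cleaner course.
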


\begin{proof}[Theorem \ref{thm:fclt}]
    Definition \ref{def:landscape} directly implies that the value of any landscape $\lambda \in \Lcal$ at any $\xbf \in [0,T]^d$ is bounded from above by the radius of biggest ball (in the $\ell^\infty$ metric of $\Rbb^d$) in $[0,T]^d$, which equals $T/2$. From this observation, we conclude that $F: \Lcal \to \Rbb$,  $F(\lambda) = T/2$ is a measurable envelope for $\Fcal = \{f_\xbf: \Lcal  \to \Rbb, \,  \lambda \mapsto \lambda(\xbf)\}_{\xbf \in [0,T]^d}$.

    Let $Q$ be a measure over $\Lcal$, and $\xbf,\, \ybf \in [0,T]^d$. The $L^2(Q)$ distance between $f_\xbf, \, f_\ybf \in \Fcal$ satisfies
    \begin{align*}
        \norm{f_\xbf - f_\ybf}^2_{Q,\, 2} & = \int_{\Lcal} |f_\xbf(\lambda) - f_\ybf(\lambda)|^2 \, dQ(\lambda)\\
        & = \int_{\Lcal} |\lambda(\xbf) - \lambda(\ybf)|^2 \, dQ(\lambda)\\
        & \leq \int_\Lcal \norm{\xbf - \ybf}_\infty^2\, dQ(\lambda) =  \norm{\xbf - \ybf}_\infty^2.
    \end{align*}

    We consider a partition $G$ denoted by $0 = t_0 < t_1 < \dots < t_N = T$ of the interval $[0,T]$ such that $\abs{t_i - t_{i+1}} = \epsilon \norm{F}_{Q, \, 2} =  \frac{\epsilon T}{2}$, where $F$ is the measurable envelope of $\Fcal$ as defined above.  Let $G^d$ be the corresponding grid obtained with this partition in $[0,T]^d$.
    We claim that $\{f_\tbf: \tbf \in G^d\}$ is an $(\epsilon T/2)$-net for $\Fcal$: For any $f_\xbf\in \Fcal$, there is some $\tbf = (t_{i_1}, \dots, t_{i_d}) \in G^d$ such that $t_{i_k} \leq x_k \leq t_{i_k +1}$ for $1 \leq k \leq d$. Then, we have
    \(\norm{f_\xbf - f_\tbf}_{Q, 2} \leq \norm{\xbf - \tbf}_\infty = \frac{\epsilon T}{2}\),
    proving that this family of functions is an $(\epsilon T/2)$-net.

    We can fit $2/\epsilon$ intervals of length $\epsilon T/ 2$ in $[0,T]$, meaning that $(2/\epsilon)^d$ is an upper bound for $N(\Fcal,\, L_2(Q),\, \epsilon \norm{F}_{Q,\, 2})$ for any probability measure $Q$, and thus for the supremum over $Q$. This means that
    \[\int_0^1 \sqrt{\log \sup_Q N(\Fcal, L_2(Q), \epsilon \norm{F}_{Q,\, 2})}\, d \epsilon  = \int_0^1 \sqrt{d(\log 2 - \log \epsilon)} \,d\epsilon < +\infty.\]
    Since the square of the measurable envelope is clearly integrable, using Lemma \ref{lemma:convergence}, we conclude that the empirical process indexed by $\Fcal$ in \eqref{eq:empirical_process_landscapes} converges to the Gaussian process in the statement of Theorem \ref{thm:fclt}. 
\end{proof}

\section{Confidence Bands for Persistence Landscapes: Method and Simulations}\label{sec:simulations}

We now give an algorithm for deriving confidence bands for multiparameter persistence landscapes; its implementation and the following experiments are available at: \url{https://github.com/inesgare/bands-mph-landscapes}.

\begin{algorithm}
    \caption{Bootstrap Confidence Bands Computation (Standard and Multiplier Bootstrap)}
    \label{alg:confidence_bands}
    \begin{algorithmic}[1]
        \Require Dataset or multiparameter landscapes $ \{\lambda_1, \lambda_2, \dots, \lambda_{n}\}$, number of bootstrap samples $B$, confidence level $\alpha$
        \Ensure Confidence bands $\ell_n, u_n: [0,T]^d \to \Rbb$
        
        \State Compute $\overline{\lambda}_n=n^{-1} \sum_{i=1}^n \lambda_i$
        
        \For{$b = 1$ to $B$}
            \If{using standard bootstrap}
                \State Draw a bootstrap sample $\{\lambda^\ast_1, \dots, \lambda_{\lfloor n/2\rfloor}^*\}$ by sampling with replacement
                \State Set $\hat{\theta}^*_b = \sup_{\xbf \in [0,T]^d}\abs{\sqrt{n} (\overline{\lambda}_{\lfloor n/2 \rfloor}^*(\xbf) - \overline{\lambda}_n(\xbf))}$
            \ElsIf{using multiplier bootstrap}
                \State Generate a set of i.i.d. multipliers $\xi_1, \dots \xi_ n \sim N(0,1)$
                \State Set $\hat{\theta}^*_b = \sup_{\xbf \in [0,T]^d} \frac{1}{\sqrt{n}}\abs{\sum_{i=1}^{n} \xi_i (\lambda_i^*(\xbf) - \overline{\lambda}_n(\xbf))}$
            \EndIf
        \EndFor
        
        \State Define $\tilde{Z}(\alpha) = \inf \left\{z : B^{-1}\sum_{b=1}^B I(\hat{\theta}^*_b > z) \leq \alpha \right\}$        
        \State Set $\ell_n(\xbf) = \overline{\lambda}_n(\xbf) - \frac{\tilde{Z}(\alpha)}{\sqrt{n}}$ and $u_n(\xbf) = \overline{\lambda}_n(\xbf) + \frac{\tilde{Z}(\alpha)}{\sqrt{n}}$
    \end{algorithmic}
\end{algorithm}

We demonstrate the confidence bands in practice through a use-case involving samples of \(N = 500\) points drawn from three distinct topological spaces: a sphere (radius \(R = 3\)), a torus (radii \(R = 3\), \(r = 0.7\)), and a Klein bottle, all embedded in \(\mathbb{R}^3 \). To introduce noise, we apply Gaussian noise with a scale of 0.1 and salt-and-pepper noise, where 0.5\% of the points are randomly displaced by up to 0.5 units. 
We compute the multiparameter landscapes using the \texttt{multipers} package \cite{multipers}, constructing 2D filtrations with one parameter being the scale (for the Vietoris--Rips filtration) and the other the value of a kernel density estimation (for the superlevel set filtration). We consider the first landscape of the 1-dimensional homology, that is, the landscape capturing the \textit{biggest} loop within the shape. 
Three instances of the input point clouds, along with their corresponding average landscapes and confidence bands with standard bootstrap (see Algorithm \ref{alg:confidence_bands}) for \(n = 100\) samples, are illustrated in Figure \ref{fig:confidence_bands}.

To demonstrate the utility of confidence bands, we additionally train a maximum depth band (MDB) functional classifier. The \emph{depth} of a landscape is defined by how often it lies within the confidence band for each shape. We classify landscapes based on the class that maximizes this depth. We perform classification with $n=100$ and $n=500$ landscapes per class, and report accuracies in Table \ref{tab:mbd-bootstrap-accuracies} after 5-fold validation.

\begin{figure}
    \centering
    \includegraphics[width=\linewidth]{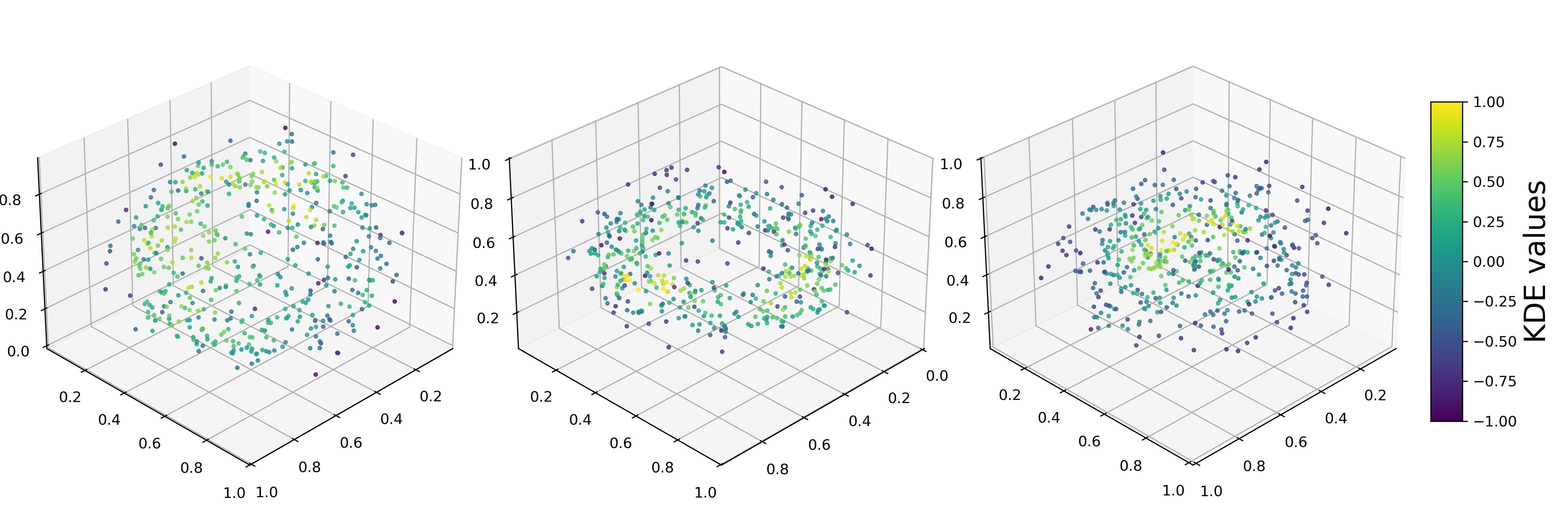}
    \includegraphics[width=\linewidth]{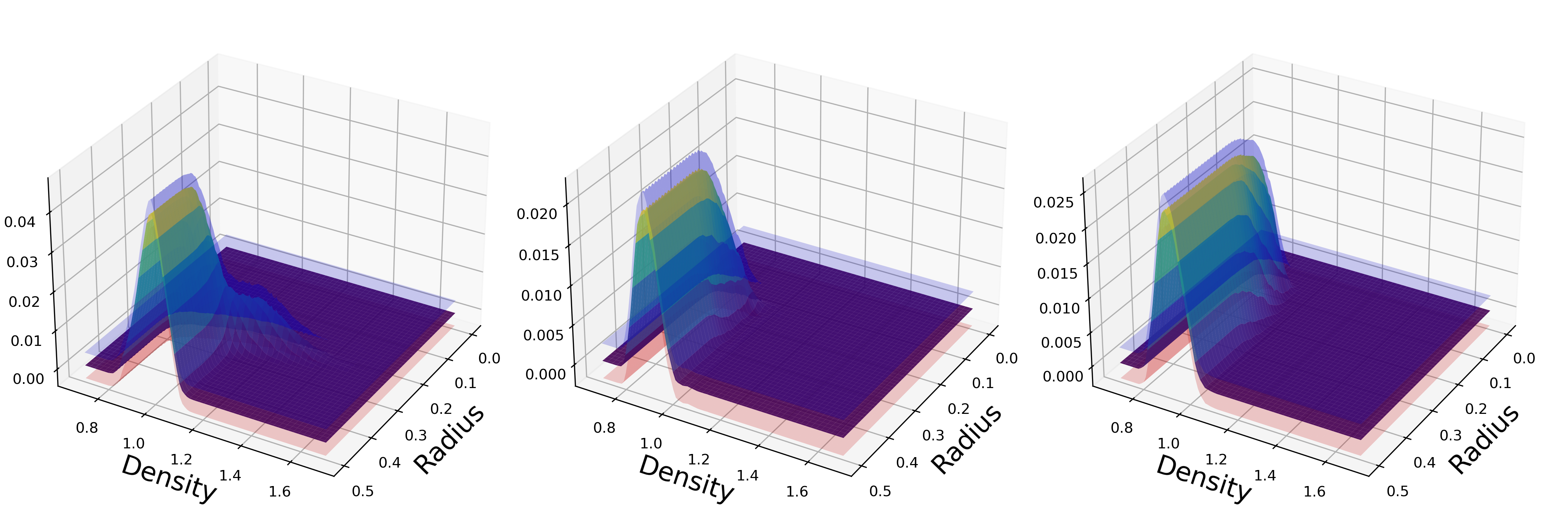}
    \caption{\textbf{Input point clouds and average landscapes with the standard bootstrap confidence bands.} \textit{Above:} Samples with $N=500$ points over a sphere of radius $R=3$ (left), a torus with radii $R=3$ and $r=0.7$ (center), and a Klein bottle (right) with Gaussian and salt and pepper noise added. Points are colored by the value of the Kernel density estimation. \textit{Below:} Average sample landscapes for $n=100$ samples and confidence bands with Standard Bootstrap.}
    \label{fig:confidence_bands}
\end{figure}

\begin{table}[]
\centering
\caption{\textbf{Mean accuracies after 5-fold cross validation} for the MBD classifier using the standard or multiplier bootstrap in single (SPH) or multiparameter (MPH) persistence. Models trained over $n$ subsamples of each class of shapes: spheres, torii and Klein bottles, as explained in Figure \ref{fig:confidence_bands}.\\}
\label{tab:mbd-bootstrap-accuracies}
\resizebox{\textwidth}{!}{%
\begin{tabular}{lcccc}
\hline
        &\, SPH Standard\,\,    & \,SPH Multiplier\,  & \, MPH Standard \,     & \, MPH Multiplier \,\  \\ \hline
$n=100$ & $0.97\pm 0.02$  & $0.93 \pm 0.03$ & $1.00\pm 0.00$    & $1.00 \pm 0.00$   \\
$n=500$ & $0.92 \pm 0.01$ & $0.88 \pm 0.02$ & $0.997 \pm 0.003$ & $0.985 \pm 0.004$ \\ \hline
\end{tabular}}
\end{table}

\section{Discussion}

This paper establishes the theoretical foundations and methodology for deriving confidence bands for multiparameter persistence landscapes, advancing statistical tools in this emerging subfield of TDA. Confidence bands hold intrinsic statistical value; we further demonstrate their practical utility in a classification task. Our results show that this method achieves near-perfect accuracy, with MPH outperforming SPH. However, accuracy slightly decreases as the number of subsamples increases, which result in narrower confidence bands.

The main limitation of our work is the computational bottleneck of computing multiparameter persistence landscapes, making our approach inherently constrained by the challenges of MPH. A key direction for future research is improving computational expense to then apply this methodology to real data.

\paragraph{Acknowledgements.} We wish to thank Jonathan Lau for interesting discussions that inspired this work. I.G.R.~is funded by a London School of Geometry and Number Theory--Imperial College London PhD studentship, which is supported by the UKRI EPSRC [EP/S021590/1]. A.M.~is supported by the UKRI EPSRC grant [EP/Y028872/1], Mathematical Foundations of Intelligence: An ``Erlangen Programme'' for AI. Q.W.~is funded by a CRUK--Imperial College London Convergence Science PhD studentship (2021 cohort, PIs Monod/Williams), which is supported by Cancer Research UK under grant reference [CANTAC721$\backslash$10021].

\bibliographystyle{alpha}
\bibliography{refs}

\end{document}